\newdimen\symskip
\newdimen\defskip
\newdimen\parind
\newdimen\leftmarge
\newdimen\theoremshape
\newcommand*{\clei}{\nobreak\hskip\z@skip}
\renewcommand{\:}{\textup{:}}
\renewcommand{\~}{\textup{;}}
\DeclareRobustCommand*{\ti}{~\textemdash{} }
\DeclareRobustCommand*{\dh}{\clei\hbox{-}\clei}
\newcommand{\No}{№\,}
\def\thempfn{\ifcase\value{footnote}1\or *\or **\or ***\else\@ctrerr\fi}
\renewcommand\footnoterule{%
  \kern-3\p@
  \hrule\@width1in
  \kern2.6\p@}
\renewcommand{\@biblabel}[1]{[#1]}
\renewenvironment{thebibliography}[1]
     {\renewcommand{\refname}{References}%
      \section*{\refname}%
      \@mkboth{\MakeUppercase\refname}{\MakeUppercase\refname}%
      \list{\@biblabel{\@arabic\c@enumiv}}%
           {\itemsep\baselineskip
            \leftmargin\parind
            \settowidth\labelwidth{\@biblabel{#1}}%
            \labelsep\parind\advance\labelsep-\labelwidth
            \@openbib@code
            \usecounter{enumiv}%
            \let\p@enumiv\@empty
            \renewcommand\theenumiv{\@arabic\c@enumiv}}%
      \sloppy
      \clubpenalty4000
      \@clubpenalty\clubpenalty
      \widowpenalty4000%
      \sfcode`\.\@m}
     {\def\@noitemerr
       {\@latex@warning{Empty `thebibliography' environment}}%
      \endlist}
\def\@maketitle{%
  \newpage%\null
  \vskip0.5em%
  UDK \udk%
  \vskip0.5em%
  MSC \msc%
  \vskip1.5em%
  \begin{center}\bf%
  \let\footnote\thanks%
   %{\Large\lineskip .5em\begin{tabular}[t]{c}\@author\end{tabular}\par}%
   {\Large\@author\par}%
   \vskip1em%
   {\LARGE\@title\par}%
   \vskip1em%
   {\large\@date}%
  \end{center}%
  \par
  \vskip1.5em}
\def\@title{\@latex@warning@no@line{No \noexpand\title given}}
\renewcommand\sectionmark[1]{%
 \markright{%
  \ifnum \c@secnumdepth >\z@
   \thesection. \ %
  \fi
 #1}}%
\renewcommand{\section}{\@startsection{section}{1}{0pt}%
{5.5ex plus .5ex minus .2ex}{1.5ex plus .3ex}%
{\center\normalfont\Large\bfseries\sffamily\boldmath}}
\newcommand{\Ss}{\textup{\S\,}}
\def\@postskip@{\hskip.5em\relax}
\def\postsection{.\@postskip@}
\def\@seccntformat#1{\csname pre#1\endcsname\csname the#1\endcsname\csname post#1\endcsname}
\renewcommand{\thesection}{\textup{\arabic{section}}}
\newcommand{\parr}{\par\addvspace{\defskip}}
\newcommand{\theo}[2]{\newtheorem{#1}{#2}}
\newcommand{\deff}[2]{\newenvironment{#1}{\parr\textbf{#2.}}{\parr}}
\def\@begintheorem#1#2[#3]{%
  \deferred@thm@head{\the\thm@headfont \thm@indent
    \@ifempty{#1}{\let\thmname\@gobble}{\let\thmname\@iden}%
    \@ifempty{#2}{\let\thmnumber\@gobble}{\let\thmnumber\@iden}%
    \@ifempty{#3}{\let\thmnote\@gobble}{\let\thmnote\@iden}%
    \thm@notefont{\bfseries\upshape}%
    \indent%
    \thm@swap\swappedhead\thmhead{#1}{#2}{#3}%
    \the\thm@headpunct
    \thmheadnl % possibly a newline.
    \hskip\thm@headsep
  }%
  \ignorespaces}
\renewenvironment{proof}{\parr\pushQED{\qed}\normalfont$\square\quad$}{\popQED\@endpefalse\parr}
\newcounter{col}
\newcounter{coll}
\newcommand{\mt}[3]{\multicolumn{#1}{#2}{#3}}
\newcommand{\news}{\\\hline}
\newcommand{\refcol}[1]{\addtocounter{col}{#1}}
\newcommand{\fcol}{\setcounter{coll}{\value{col}}}
\newcommand{\mc}[3]{&\mco{#1}{#2}{#3}}
\newcommand{\nc}[2]{\mc{1}{#1}{#2}}
\newcommand{\mco}[3]{\mt{#1}{#2|@{\refcol{#1}}}{#3}}
\newcommand{\mcu}[3]{\mco{#1}{|#2}{#3}}
\newcommand{\ncu}[2]{\mcu{1}{#1}{#2}}
\newcommand{\mtco}[2]{\mt{\value{coll}}{#1}{#2}}
\newcommand{\mtc}[1]{\mtco{c}{#1}}
\newcommand{\mtcc}[1]{\mtco{|c|}{#1}}
\newcommand{\nwl}{\newline}
\newcommand{\lonu}[3]{%
\setcounter{col}{0}
\begin{longtable}{#1}
\hline#2
\fcol
\endfirsthead
\hline#2
\endhead
\mtc{}\\
\mtc{\textit{Continuation on the next page}}
\endfoot
\endlastfoot
\hline#3\news\end{longtable}}
\newcommand{\fa}{\,\forall\,}
\newcommand{\exi}{\,\exists\,}
\newcommand{\Ra}{\Rightarrow}
\newcommand{\xra}{\xrightarrow}
\newcommand{\suml}[2]{\sum\limits_{{#1}}^{{#2}}}
\newcommand{\sums}[1]{\sum\limits_{{#1}}}
\newcommand{\cups}[1]{\bigcup\limits_{{#1}}}
\newcommand*{\bw}[1]{#1\nobreak\discretionary{}{\hbox{$\mathsurround=0pt #1$}}{}}
\newcommand{\sco}{,\ldots,}
\newcommand{\sd}{\bw\cdot\ldots\bw\cdot}
\newcommand{\ha}[1]{\left\langle#1\right\rangle}
\newcommand{\br}[1]{\bigl(#1\bigr)}
\newcommand{\Br}[1]{\Bigl(#1\Bigr)}
\newcommand{\ter}[1]{\textup{(}#1\textup{)}}
\newcommand{\bgm}[1]{\bigl|#1\bigr|}
\newcommand{\Bm}[1]{\Bigl|#1\Bigr|}
\newcommand{\hn}[1]{\left\|#1\right\|}
\newcommand{\bc}[1]{\bigl\{#1\bigr\}}
\newcommand{\ph}{\varphi}
\newcommand{\ep}{\varepsilon}
\DeclareMathOperator{\Hom}{Hom}
\DeclareMathOperator{\End}{End}
\DeclareMathOperator{\Rea}{Re}
\DeclareMathOperator{\const}{const}
\DeclareMathOperator{\supp}{supp}
\DeclareMathOperator{\conv}{conv}
\DeclareMathOperator{\Spec}{Spec}
\begin{document}

\author{Styrt O.\,G.\thanks{Russia, MIPT, oleg\_styrt@mail.ru}}
\title{Convergence in distribution\\
of the product of random variables\\
from an independent sample\\
on a~compact algebraic group}
\date{}
\newcommand{\udk}{512.743.7+512.813.3+512.815.1+512.815.7+519.213+519.214.7}
\newcommand{\msc}{14L30+20G20+20P05+22C05+22E47+28C10+60A10+60B15}

\maketitle

An equivalent condition for the product of elements of an independent random sample on a~compact algebraic group converging in distribution to some
random variable as the sample size increases is obtained. Namely, a~limit distribution exists and is uniform on the support of the parent distribution if
a~random variable with such a~distribution does not belong with the unit probability to any non-trivial coset over an algebraic subgroup that lies in its
normalizer\~ otherwise, it does not exist.

\smallskip

\textit{Key words}\:
compact algebraic group, Haar measure, random variable, probability measure, convergence in distribution, uniform distribution.

\section*{The notations used}

\lonu{|>{$}l<{$}|l|}
{\ncu{>{\nwl}p{2cm}<{\nwl}}{Expression} \nc{>{\nwl}p{9cm}<{\nwl}}{Meaning}}{%
V^* & the dual space to a~space~$V$
\news
E & the identity operator
\news
\Spec A & the set of eigenvalues of an operator~$A$
\news
J_m(\lambda) & the Jordan cell of size~$m$ with eigenvalue~$\lambda$
\news
X^G & the subset of fixed points of an action $G\colon X$
\news
G_x & the stabilizer of a~point~$x$ for an action of a~group~$G$
\news
I_Y & the indicator function of a~subset~$Y$
\news
\xi\sim D & a~random variable~$\xi$ has distribution law~$D$
\news
\xi\sim\eta & random variables $\xi$ and~$\eta$ are identically distributed
\news
M\xi & the mean of a~random variable~$\xi$
\news
\xra{F} & convergence of random variables in distribution
\news
\text{a.\,s.} & <<almost surely>> (with probability~$1$)
\news
\text{i.\,i.\,d.} & <<independent identically distributed>>
\news
\text{u.\,o.\,s.} & <<unless otherwise stated>>
\news
\mtcc{In a~metric space $(X,\rho)$\:}
\news
\rho(x,Y) & $\inf\bc{\rho(x,y)\colon y\in Y}$\ti the distance from a~point~$x$ to a~subset~$Y$
\news
U_{\ep}(x) & $\bc{y\colon\rho(x,y)<\ep}$\ti the (open) $\ep$\dh neighborhood of a~point~$x$
\news
\ep\text{\dh net} & a~subset~$Y$ such that $\cups{x\in Y}\br{U_{\ep}(x)}=X$}

\section{Introduction}\label{introd}

The papers \cite{qua,fie,rin} are devoted to researching various properties of random variables with values in a~finite set~$G$ on which, (mainly binary)
operations are given. Without loss of generality, we can suppose that $G=\{1\sco k\}$, $k\in\mathbb{N}$. In the space~$\mathbb{R}^k$, take the standard
basis~$Q$ and the affine $(k-1)$\dh dimensional simplex $S^{(k)}:=\conv(Q)$. Then the distribution law of each random variable~$\xi$ on~$G$ is uniquely
defined by the vector $p=p(\xi)\in S^{(k)}$ with coordinates $p_i:=P(\xi=i)$ ($i\in G$). A~distribution is called \textit{uniform} if all coordinates of
its vector are equal (i.\,e. are equal to~$\frac{1}{k}$).

If a~set~$\mathcal{F}$ of operations of arbitrary arities is given on~$G$, then, one can define a~notion of a~\textit{formula} over the set~$\mathcal{F}$
depending on finitely many formal variables. The strict definition of a~formula is inductive\~ it is well known in the particular case $k=2$ for Boolean
functions and literally retranslated onto the general one. In some way, a~formula over~$\mathcal{F}$ represents a~finite expression that involves formal
variables and applying operations from~$\mathcal{F}$ taking the number of arguments into account. The \textit{complexity} of a~formula is defined as the
number of applying operations in it, and the \textit{depth}\ti as the depth of its circuit of functional elements (i.\,e. the possibly greatest distance
between a~root and a~leaf). Finally, a~formula is called \textit{read-once} if each formal variable appears in it at most once.

If a~formula involves $n$ formal variables $x_1\sco x_n$, then it naturally induces an $n$\dh ary operation on~$G$, i.\,e. a~mapping $f\colon G^n\to G$.
Thus, for arbitrary random variables $\xi_1\sco\xi_n$ on~$G$, the random variable $\eta:=f(\xi_1\sco\xi_n)$ is defined on~$G$. In doing so, in the case
of (mutual) independence of the random variables $\xi_1\sco\xi_n$, the distribution vector $p(\eta)\in\mathbb{R}^k$ is uniquely expressed through the
vectors $p(\xi_i)$ ($i=1\sco n$) via an $n$\dh ary operation~$\widehat{f}$ on the simplex~$S^{(k)}$. With one more assumption that the formula is
read-once, the replacement in it of all operations from~$\mathcal{F}$ with the corresponding operations on~$S^{(k)}$ and of the variables~$x_i$\ti with
the vectors $p_i\in S^{(k)}$ gives exactly the operation~$\widehat{f}$.

For read-once formulas of i.\,i.\,d. random variables, the question of convergence of distribution to the uniform one as the complexity and (or) the
depth increases is actively studied. Present one important result in this direction.

\begin{df} A~set with a~binary operation on it is called a~\textit{quasigroup} if left and right multiplications by all elements in it are bijective.
\end{df}

{\newcommand{\refr}{see Theorem~1 in~\cite[\Ss3]{qua}}
\begin{theorem}[\refr]\label{past} Let $*$ be a~quasigroup operation on the set $G=\{1\sco k\}$ and $p\in S^{(k)}$ a~vector with more than $\frac{k}{2}$
non-zero coordinates. Then the distribution of the random variable obtained by substitution of independent random variables with distribution~$p$ to
a~read-once formula over the set~$\{*\}$, as its depth increases, converges exponentially to the uniform one.
\end{theorem}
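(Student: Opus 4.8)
The plan is to track how a single application of the operation~$*$ acts on distribution vectors and to show that the corresponding operator on~$S^{(k)}$ contracts the distance to the uniform vector~$u:=\frac1k(1\sco1)$ by a fixed factor. Write $\widehat{*}$ for the induced binary operation on~$S^{(k)}$: if $\xi,\eta$ are independent with distribution vectors $p,q$, then $\xi*\eta$ has distribution vector $r=\widehat{*}(p,q)$ with $r_c=\sums{(a,b):a*b=c}p_aq_b$. The quasigroup property says that for each fixed~$a$ the map $b\mapsto a*b$ is a bijection of~$G$, and likewise for each fixed~$b$ the map $a\mapsto a*b$ is a bijection\~ equivalently, each value $c$ is hit exactly once in every row and every column of the multiplication table. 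The first step is the elementary observation that this makes $\widehat{*}$ \textit{bi-stochastic} in a strong sense: $\widehat{*}(u,q)=u$ and $\widehat{*}(p,u)=u$ for all $p,q\in S^{(k)}$, since summing one argument over a Latin-square row (or column) always yields the full sum~$1$ distributed evenly.

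The second step is to estimate $\hn{r-u}$ in terms of $\hn{p-u}$ and $\hn{q-u}$ for a suitable norm (I would use $\ell^1$ or $\ell^2$; $\ell^2$ is cleanest for the quasigroup structure since the associated linear maps are then sub-unitary). Writing $p=u+p'$, $q=u+q'$ with $\sums{}p'_i=\sums{}q'_i=0$, bilinearity of $\widehat{*}$ together with the vanishing just proved gives $r-u=B(p',q')$ where $B$ is the bilinear part, so $\hn{r-u}\le C\hn{p'}\cdot\hn{q'}=C\hn{p-u}\cdot\hn{q-u}$ for an absolute constant~$C$ depending only on~$k$. The key point is that this is a product of the two deviations, not a sum\~ this is exactly what drives exponential convergence in a read-once formula, because at a node of a balanced subformula the deviation at the output is bounded by the product of the deviations of its two inputs.

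The third step is the induction on depth. For a read-once formula of depth~$d$, each leaf carries deviation $\delta:=\hn{p-u}$. The recursion $\delta_{\text{out}}\le C\,\delta_{\text{left}}\delta_{\text{right}}$ at each internal node, unwound over a formula of depth~$d$, yields a bound of the form $\hn{p(\eta)-u}\le C^{-1}(C\delta)^{2^{?}}$; the honest bound, valid for an \textit{arbitrary} read-once formula of depth~$d$ (not necessarily balanced), is obtained by noting that along any root-to-leaf path the number of multiplicative combinings is at least... here one must be a little careful, since a long thin formula has small depth but few combinings. The correct statement uses that a formula of depth~$d$ in which every variable occurs once has its output deviation bounded by roughly $(C\delta)^{F(d)}$ where $F(d)$ grows like a constant to the power~$d$ once one accounts for the worst case; concretely, one shows by induction on~$d$ that $\hn{p(\eta)-u}\le g(d)$ where $g(0)=\delta$ and $g(d)\le C\,g(d-1)^2$ in the balanced worst case and more generally $g(d)\le C\,g(d-1)\cdot\delta$, giving $g(d)\le C^{-1}(C\delta)^{\,d+1}$ — exponential in~$d$ provided $C\delta<1$.

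The main obstacle is precisely the last point: the hypothesis is only that $p$ has more than $k/2$ nonzero coordinates, which does \textit{not} by itself give $C\delta<1$, so a naive single-step contraction estimate is not available. The resolution — and the heart of the argument — is to use the quasigroup structure more sharply: one shows that after \emph{one} application of~$\widehat{*}$ to two copies of~$p$ the support strictly grows (using that two subsets of~$G$ each of size~$>k/2$ must, under a Latin-square operation, produce all of~$G$, by a counting/inclusion argument), so after boundedly many levels the support is all of~$G$ and the deviation drops strictly below the threshold where the product-contraction estimate takes over. Combining the "support saturation in $O(\log k)$ levels" phase with the "geometric contraction thereafter" phase yields overall exponential convergence in the depth. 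I expect verifying the support-growth step — that the convolution-type operation of two majority-size supports is everything, and quantifying how fast the deviation then decays below $1/C$ — to be the technically delicate part, with the rest being the routine bilinear-estimate and induction bookkeeping described above.
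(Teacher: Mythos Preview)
The paper does not contain a proof of this theorem at all: Theorem~\ref{past} is quoted from Yashunskii's paper~\cite{qua} (it is explicitly labelled ``see Theorem~1 in~\cite[\Ss3]{qua}'') and serves only as background motivation for the paper's own main result, Theorem~\ref{main}, about compact algebraic groups. So there is nothing in the present paper to compare your proposal against.

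That said, as a sketch toward an independent proof your plan is broadly reasonable but has a real gap in the final ``support saturation then contraction'' step. You correctly observe that the bilinear extension of~$\widehat{*}$ satisfies $\widehat{*}(u,q')=\widehat{*}(p',u)=0$ and hence $r-u=\widehat{*}(p',q')$, and the $\ell^1$ bound $\hn{r-u}_1\le\hn{p'}_1\hn{q'}_1$ (so $C=1$) follows immediately from the Latin-square structure. You are also right that two supports of size~$>k/2$ combine under~$*$ to all of~$G$, so full support is reached after one level. The gap is that full support does \emph{not} by itself force $\hn{p-u}_1<1$: a fully supported distribution can still have total-variation deviation arbitrarily close to~$2(1-1/k)$. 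What is actually needed is a quantitative bound showing that one application of~$\widehat{*}$ to two copies of a vector with majority support strictly decreases the deviation below the contraction threshold; this requires a sharper use of the quasigroup combinatorics than the mere support argument you outline, and is where the real work in Yashunskii's proof lies. Your write-up should either supply that estimate or defer to~\cite{qua} for it.
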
}

Thus, Theorem~\ref{past} states that, unless the above-mentioned convergence holds for i.\,i.\,d. random variables, each of them with
probability~$1$ takes a~value in some subset $B\subset G$ of order at most~$\frac{|G|}{2}$.

In this paper, similar properties of random variables on compact algebraic groups are researched. Consider a~fixed probability space
$(\Omega,\mathcal{A},P)$. A~\textit{random variable} on a~set~$X$ with a~$\sigma$\dh algebra~$\mathcal{B}$ of its subsets is an
$(\mathcal{A},\mathcal{B})$\dh measurable mapping $\xi\colon\Omega\to X$\~ its distribution law is uniquely defined by the \textit{probability measure}
$\mu_{\xi}\colon\mathcal{B}\to[0;1],\,B\to P(\xi\in B)$. Now let $X$ be a~topological space. Then, we will consider random variables
on~$X$ with respect to the corresponding Borel $\sigma$\dh algebra and define \textit{convergence in distribution} $\xi_n\xra[n\to\infty]{F}\xi$ as the
convergence $M\br{f(\xi_n)}\xra[n\to\infty]{}M\br{f(\xi)}$ for any continuous bounded function $f\colon X\to\mathbb{R}$. If, in addition, $X$ is
a~compact set, then the condition of boundedness of the function can be omitted since it follows from continuity.

U.\,o.\,s., assume that, on all finite-dimensional real linear spaces and algebraic groups, the \textit{real} topology is given.

Let $(G,\cdot)$ be a~compact algebraic group. One can define on it the Borel $\sigma$\dh algebra~$\mathcal{B}$ and, also, the \textit{Haar measure}
$\mu\colon\mathcal{B}\to[0;1]$ invariant under left and right multiplications and satisfying the normalization condition $\mu(G)=1$. \textit{Subgroups}
of the group~$G$ will, u.\,o.\,s., be supposed to be \textit{algebraic}. If a~random variable~$\xi$ on~$G$ satisfies the condition $\mu_{\xi}\equiv\mu$,
then its distribution law will be called \textit{uniform} and denoted by $R(G)$.

Each read-once formula of complexity $n-1$ ($n\in\mathbb{N}$) over the set~$\{\cdot\}$ induces the $n$\dh ary operation
$G^n\to G,\,(g_1\sco g_n)\to g_{\tau(1)}\sd g_{\tau(n)}$, $\tau\in S_n$. Substitution to it of independent random variables $\xi_1\sco\xi_n$ with common
distribution law~$D$ gives a~random variable with the distribution law of $\xi_1\sd\xi_n$\~ this law is uniquely defined by $D$ and~$n$ while does not
depend on~$\tau$.

Due to compactness of~$G$, each non-empty family of its subgroups has a~minimal element by inclusion. Let $\xi$ be an arbitrary random variable on~$G$.
The family of all subgroups $H\subset G$ such that $\xi\in H$ a.\,s. is closed under intersection. Hence, there exists the least of such subgroups by
inclusion that will be called the \textit{support} of~$\xi$ (\textit{not.} $\supp\xi$).

The main result of the paper is the following theorem.

\begin{theorem}\label{main} Let $\xi$ be a~random variable on~$G$ with support~$\overline{G}$ and $\xi_n\sim\xi$ \ter{$n\in\mathbb{N}$} independent
random variables. Then the following conditions are equivalent\:
\begin{enumerate}
\item\label{asy} $\xi_1\sd\xi_n\xra[n\to\infty]{F}\eta$ for some random variable~$\eta$ on~$G$\~
\item\label{asyr} $\xi_1\sd\xi_n\xra[n\to\infty]{F}\eta$ where $\eta\sim R(\overline{G})$\~
\item\label{cosu} for each subgroup $H\subset\overline{G}$ and coset $Z\in\br{N(H)}/H$ such that $\xi\in Z$ a.\,s., we have $H=\overline{G}$\~
\item\label{cos} for each subgroup $H\subset G$ and coset $Z\in\br{N(H)}/H$ such that $\xi\in Z$ a.\,s., we have $Z=H$.
\end{enumerate}
\end{theorem}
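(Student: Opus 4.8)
\textit{The plan} is to reformulate everything in terms of harmonic analysis on the compact group $\overline G$ and the convolution powers of $\mu:=\mu_\xi$ (this is a Kawada--Itô-type statement, reproven in the algebraic setting). Since $\xi\in\overline G$ a.s.\ and $\overline G$ is the \emph{least} subgroup with this property, $\supp\mu$ topologically generates $\overline G$; also $\mu_{\xi_1\sd\xi_n}=\mu^{*n}$ and is carried by $\overline G$. Convergence in distribution of $\xi_1\sd\xi_n$ is weak convergence of the measures $\mu^{*n}$, and I would first invoke the Peter--Weyl theorem: the matrix coefficients of the finite-dimensional irreducible unitary representations $\pi$ of $\overline G$ span a uniformly dense subspace of $C(\overline G)$, so (using $\|\mu^{*n}\|\le1$) weak convergence of $\mu^{*n}$ is equivalent to convergence of all Fourier coefficients $\widehat{\mu^{*n}}(\pi)=\widehat\mu(\pi)^n$, where $\widehat\mu(\pi):=\int_{\overline G}\pi(g)\,d\mu(g)$, and the limit is then the probability measure whose Fourier transform is $\pi\mapsto\lim_n\widehat\mu(\pi)^n$. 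Thus \ref{asy} says exactly that $\widehat\mu(\pi)^n$ converges for every $\pi$.

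Next I would study $\widehat\mu(\pi)$ through its spectrum. As $\pi$ is unitary, $\widehat\mu(\pi)$ is a contraction, so $\Spec\widehat\mu(\pi)$ lies in the closed unit disc and, examining Jordan cells $J_m(\lambda)$, $\widehat\mu(\pi)^n$ converges iff $\widehat\mu(\pi)$ has no unimodular eigenvalue except possibly a semisimple $1$. The crux is to analyze a unimodular eigenvalue: if $\widehat\mu(\pi)v=\lambda v$ with $v\ne0$ and $|\lambda|=1$, then the equality case of $\|\int\pi(g)v\,d\mu\|\le\int\|\pi(g)v\|\,d\mu=\|v\|$ (together with constancy of $\|\pi(g)v\|$) forces $\pi(g)v=\lambda v$ for $\mu$-a.e.\ $g$; hence the line $\mathbb Cv$ is invariant under $\pi(\supp\mu)$, thus under $\pi(\overline G)$, so by irreducibility $\pi$ is one-dimensional, a character $\chi$, with $\chi\equiv\lambda$ on $\supp\mu$ --- and then $\lambda\ne1$, for otherwise $\chi$ would be trivial on a generating set, contradicting $\pi$ nontrivial. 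In particular no nontrivial $\pi$ of dimension $>1$ has a unimodular eigenvalue, and $1$ is never an eigenvalue of $\widehat\mu(\pi)$ for nontrivial $\pi$. Collecting this: $\mu^{*n}$ converges $\iff$ no nontrivial character $\chi$ of $\overline G$ has $|\widehat\mu(\chi)|=1$ $\iff$ there is no proper normal algebraic subgroup $H\subsetneq\overline G$ with $\supp\mu$ inside a single coset $Z\ne H$ of $H$ (the coset being $\chi^{-1}(\lambda)$ with $H=\ker\chi$, and conversely); and when this holds $\widehat\mu(\pi)^n\to0$ for all nontrivial $\pi$, so the limit is $R(\overline G)$. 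This yields $\ref{asy}\Leftrightarrow\ref{asyr}$.

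It remains to identify the displayed coset condition with \ref{cosu} and then to check $\ref{cosu}\Leftrightarrow\ref{cos}$. If $\xi\in Z$ a.s.\ for some $Z\in N(H)/H$, then $\overline G=\supp\xi$ is a subgroup in which $\xi$ lies a.s.\ and which is contained in $N(H)$, so $H_0:=H\cap\overline G\triangleleft\overline G$ and the image of $\xi$ in $\overline G/H_0$ is a.s.\ a single element $\bar Z$ that topologically generates $\overline G/H_0$; hence $\overline G/H_0$ is monothetic, in particular abelian, and if $H_0\ne\overline G$ Pontryagin duality produces a character $\psi$ of $\overline G/H_0$ with $\psi(\bar Z)\ne1$, i.e.\ a nontrivial character $\chi$ of $\overline G$ with $|\widehat\mu(\chi)|=|\psi(\bar Z)|=1$, which by the previous paragraph rules out convergence; this gives $\ref{asy}\Rightarrow\ref{cosu}$, while the previous paragraph directly gives $\ref{cosu}\Rightarrow\ref{asyr}$. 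For $\ref{cos}\Rightarrow\ref{cosu}$: apply \ref{cos} with $H\subset\overline G$ to get $Z=H$, hence $\xi\in H$ a.s., hence $\overline G\subset H$, i.e.\ $H=\overline G$. For $\ref{cosu}\Rightarrow\ref{cos}$: given $H\subset G$, $Z\in N(H)/H$, $\xi\in Z$ a.s., intersecting the full-measure events $\{\xi\in\overline G\}$ and $\{\xi\in Z\}$ exhibits $\overline G\cap Z$ as a coset $wH_0$ of $H_0=H\cap\overline G\triangleleft\overline G$ with $\xi\in wH_0$ a.s., so \ref{cosu} forces $H_0=\overline G$, whence $\overline G\subset H$, $\xi\in H$ a.s., and finally $Z=H$.

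The main obstacle is the middle step: setting up the Peter--Weyl dictionary rigorously, controlling the power sequences $\widehat\mu(\pi)^n$ through the spectrum of the contraction $\widehat\mu(\pi)$, and above all running the equality-in-the-triangle-inequality argument that collapses a unimodular eigenvalue to a one-dimensional representation adapted to $\supp\mu$. A subsidiary technical point, needed so that the produced witnesses are legitimate for \ref{cosu}--\ref{cos}, is that kernels of continuous characters and the relevant monothetic quotients of $\overline G$ are \emph{algebraic} subgroups (respectively groups), which rests on the fact that continuous finite-dimensional representations of a compact algebraic group are algebraic.
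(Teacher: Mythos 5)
Your proposal is essentially correct, but it takes a noticeably different route from the paper, even though the two share the same analytic core: the Peter--Weyl density of matrix coefficients together with the uniform bound $\Bm{M\br{f(\xi)}}\leqslant\hn{f}$ (the paper's Lemma~\ref{clo} and Theorem~\ref{eqv}) to reduce convergence in distribution to convergence of $M\br{\ph(\eta_n)}=T^n$, $T=M\br{\ph(\xi)}$, and the equality/variance computation showing that a unimodular eigenvalue of $T$ forces $\ph(\xi)v=\lambda v$ a.\,s. The differences: the paper proves \ref{asy}$\Rightarrow$\ref{cos} by a purely metric argument (bi-invariant metric, finite $\ep$\dh net, a bump function around a point charged by the limit, and the estimate $\rho(b,H)\leqslant\rho(g_{n+1},g_n)<4\ep$), with no representation theory and no identification of the limit law, and then gets \ref{cos}$\Rightarrow$\ref{cosu} for free; you instead derive \ref{asy}$\Rightarrow$\ref{cosu} Fourier-analytically, converting a coset obstruction into a nontrivial character of the monothetic (hence abelian) quotient $\overline{G}/H_0$ via Pontryagin duality, and you close the square with a separate intersection argument for \ref{cosu}$\Rightarrow$\ref{cos}, which the paper never needs. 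Likewise, in \ref{cosu}$\Rightarrow$\ref{asyr} the paper applies the coset hypothesis directly inside the given irreducible representation, to $H=G_v$ and the coset $gG_v$ with $g\in G_{\ha{v}}\subset N(G_v)$, concluding $G_v=\overline{G}$ and hence $v\in V^{\overline G}=0$; you first collapse the representation to a one-dimensional character constant $\lambda\ne1$ on the support and then exclude such characters through their kernels, in the classical Kawada--It\^o/Stromberg style. What each buys: your version makes the obstruction explicitly character-theoretic and yields \ref{asy}$\Leftrightarrow$\ref{asyr} in one sweep, but it leans more heavily on algebraicity bookkeeping (that kernels of continuous characters, the closed subgroup generated by the topological support of $\mu_\xi$, and the quotients $\overline{G}/H_0$ are algebraic objects), which you correctly flag and which is standard for compact algebraic groups (continuous finite-dimensional representations are algebraic, compact linear groups are algebraic); note the paper tacitly relies on the same kind of fact when it feeds the stabilizers $G_v$, $G_{\ha{v}}$ into condition \ref{cosu}, so this is a shared, not a new, debt. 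The one place where your write-up is looser than needed is the blanket claim that convergence of all $\widehat{\mu}(\pi)^n$ produces a limiting probability measure; in the only direction where you use it the limit is explicitly Haar measure on $\overline{G}$, so no appeal to compactness of the space of measures is actually required.
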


The implication $\text{\ref{asyr}}\Ra\text{\ref{asy}}$ is obvious. Besides, if \ref{cos} holds, then \ref{cosu} also holds\: for each subgroup
$H\subset\overline{G}$ and coset $Z\in\br{N(H)}/H$ such that $\xi\in Z$ a.\,s., we have $Z=H$, and, hence, $\xi\in H$ a.\,s. that implies
$H\supset(\supp\xi)=\overline{G}\supset H$, $H=\overline{G}$. It remains to prove the implications $\text{\ref{asy}}\Ra\text{\ref{cos}}$
and $\text{\ref{cosu}}\Ra\text{\ref{asyr}}$ that is done further in the paper.

\section{Auxiliary facts}\label{facts}

This section contains a~number of auxiliary statements.

\begin{stm}\label{jor} If $m\in\mathbb{N}$, $\lambda\in\mathbb{C}$, and $|\lambda|<1$, then $\br{J_m(\lambda)}^n\xra[n\to\infty]{}0$.
\end{stm}

\begin{proof} As $n\to\infty$, we have
\begin{equation*}
\fa k\geqslant0\quad\quad\quad C_n^k\lambda^{n-k}\sim\frac{n^k\lambda^{n-k}}{k!}\to0
\end{equation*}
that gives $\br{J_m(\lambda)}^n=\sums{0\leqslant k<m}\Br{C_n^k\lambda^{n-k}\br{J_m(0)}^k}\to0$.
\end{proof}

Let $G$ be a~topological space and $\mathbb{F}$ a~field $\mathbb{R}$ or~$\mathbb{C}$. Denote by $C_{\mathbb{F}}(G)$ the Banach space (over~$\mathbb{F}$)
of all continuous bounded functions $G\to\mathbb{F}$ with the uniform norm. Fix random variables $\eta$ and~$\eta_n$ ($n\in\mathbb{N}$) on~$G$.

For an arbitrary finite-dimensional space~$W$ over~$\mathbb{F}$, all continuous bounded functions $f\colon G\to W$ such that
$M\br{f(\eta_n)}\xra[n\to\infty]{}M\br{f(\eta)}$ form a~space (again over~$\mathbb{F}$) that will be denoted by $L_W(G)$. In particular,
$L_{\mathbb{F}}(G)\subset C_{\mathbb{F}}(G)$.

\begin{prop}\label{oper} For any finite-dimensional spaces~$W_{1,2}$ over~$\mathbb{F}$, operator $A\colon W_1\to W_2$ and function $f\in L_{W_1}(G)$, we
have $(A\circ f)\in L_{W_2}(G)$.
\end{prop}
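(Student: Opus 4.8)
The plan is to prove Proposition~\ref{oper}: if $A\colon W_1\to W_2$ is linear and $f\in L_{W_1}(G)$, then $A\circ f\in L_{W_2}(G)$. This is a linearity/continuity statement about the definition of $L_W(G)$.

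Key idea: reduce to the scalar case $L_{\mathbb F}(G)$, which is the primitive notion, via coordinates. Pick a basis of $W_2$, write $A\circ f = (g_1,\dots,g_d)$ in those coordinates. Each $g_j = \ell_j \circ A \circ f$ where $\ell_j \colon W_2 \to \mathbb F$ is the $j$-th coordinate functional. So $g_j = (\ell_j \circ A) \circ f$ with $\ell_j \circ A \colon W_1 \to \mathbb F$ linear. So it suffices to handle the case $W_2 = \mathbb F$, i.e., show: for a linear functional $\varphi \colon W_1 \to \mathbb F$ and $f \in L_{W_1}(G)$, we have $\varphi \circ f \in L_{\mathbb F}(G)$. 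But $f \in L_{W_1}(G)$ means, by definition, that writing $f$ in coordinates as $(f_1,\dots,f_m)$, each $M(f_i(\eta_n)) \to M(f_i(\eta))$ — equivalently $M(\psi(f(\eta_n))) \to M(\psi(f(\eta)))$ for every coordinate functional $\psi$, hence for every linear functional $\varphi$ on $W_1$ (as a linear combination of coordinate functionals). That gives exactly $M((\varphi\circ f)(\eta_n)) \to M((\varphi\circ f)(\eta))$, and one must also note $\varphi\circ f$ is continuous and bounded (composition of continuous maps, and $\varphi$ sends bounded sets to bounded sets).

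So the steps in order: (1) observe that $A\circ f$ is continuous and bounded, since $A$ is continuous (linear on a finite-dimensional space) and maps bounded sets to bounded sets; (2) fix a basis $e_1,\dots,e_d$ of $W_2$ with dual functionals $\ell_1,\dots,\ell_d$, so membership in $L_{W_2}(G)$ is equivalent to $\ell_j\circ(A\circ f)\in L_{\mathbb F}(G)$ for all $j$; (3) since $\ell_j\circ A$ is a linear functional on $W_1$, write it as a linear combination of the dual functionals of a fixed basis of $W_1$; (4) use that $f\in L_{W_1}(G)$ gives $M(f_i(\eta_n))\to M(f_i(\eta))$ for each of those coordinate functions $f_i$, and take the same linear combination of these limit relations, using linearity of the mean (finite linear combinations of convergent sequences converge to the corresponding combination of limits). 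This yields $M\bigl((\ell_j\circ A\circ f)(\eta_n)\bigr)\to M\bigl((\ell_j\circ A\circ f)(\eta)\bigr)$, completing the proof.

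I do not expect any real obstacle here — the statement is essentially the remark that $L_W(G)$, defined coordinatewise, is actually coordinate-independent and functorial in $W$, and this follows from linearity of expectation together with finite-dimensionality (which makes every linear map continuous and bounded-set-preserving). The only point deserving a line of care is confirming that $A\circ f$ is a legitimate element of the ambient space of continuous bounded $W_2$-valued functions before asking whether its coordinates lie in $L_{\mathbb F}(G)$; this is immediate from continuity and linearity of $A$.
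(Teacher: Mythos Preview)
Your argument is correct, but the paper's proof is a one-liner that avoids coordinates entirely: since expectation is linear, $M\bigl((A\circ f)(\xi)\bigr)=A\bigl(M(f(\xi))\bigr)$ for any random variable~$\xi$ on~$G$; hence $M(f(\eta_n))\to M(f(\eta))$ and continuity of~$A$ immediately give $M\bigl((A\circ f)(\eta_n)\bigr)\to M\bigl((A\circ f)(\eta)\bigr)$. Your reduction to coordinate functionals amounts to the same linearity-of-expectation fact, just unpacked through a basis, so the two proofs are really the same idea\ti the paper's version is simply shorter and coordinate-free. (Note also that in the paper $L_W(G)$ is defined directly via convergence of the $W$-valued mean, not coordinatewise\~ these are of course equivalent in finite dimensions, but your phrase ``by definition'' slightly misstates the paper's setup.)
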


\begin{proof} Preserving continuity and boundedness is obvious. Besides, if $\xi$ is an arbitrary random variable on~$G$, then
$M\br{(A\circ f)(\xi)}=A\Br{M\br{f(\xi)}}$.
\end{proof}

\begin{imp}\label{lif} Any finite-dimensional space~$W$ over~$\mathbb{F}$ and function $f\colon G\to W$ satisfy the relation
$\br{f\in L_W(G)}\Leftrightarrow\br{\fa\alpha\in W^*\ (\alpha\circ f)\in L_{\mathbb{F}}(G)}$.
\end{imp}

\begin{proof} The operator $E\in\Hom_{\mathbb{F}}(W,W)=W\otimes_{\mathbb{F}}W^*$ has form $\suml{i=1}{m}(A_i\circ\alpha_i)$
($\alpha_i\in W^*$, $A_i\in\Hom_{\mathbb{F}}(\mathbb{F},W)$), and, thus, $f=\suml{i=1}{m}\br{A_i\circ(\alpha_i\circ f)}$. It remains to use
Proposition~\ref{oper}.
\end{proof}

\begin{stm} For any function $f\in C_{\mathbb{F}}(G)$ and random variable~$\xi$ on~$G$, there holds $\Bm{M\br{f(\xi)}}\leqslant\hn{f}$.
\end{stm}

\begin{proof} If $c:=\hn{f}$, then $|f|\leqslant c$, $\bgm{f(\xi)}\leqslant c$, $\Bm{M\br{f(\xi)}}\leqslant M\bgm{f(\xi)}\leqslant M(c)=c$.
\end{proof}

\begin{lemma}\label{clo} The subspace $L_{\mathbb{F}}(G)\subset C_{\mathbb{F}}(G)$ is closed.
\end{lemma}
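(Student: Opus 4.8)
The plan is to run the standard three\dh epsilon argument, using the preceding statement that $\Bm{M\br{g(\xi)}}\leqslant\hn{g}$ for every $g\in C_{\mathbb{F}}(G)$ and every random variable~$\xi$ on~$G$. First I would note that $L_{\mathbb{F}}(G)$, being by definition a subset of $C_{\mathbb{F}}(G)$, consists of continuous bounded functions, and that a uniform limit of such functions is again continuous and bounded\~ so it only remains to check the convergence condition $M\br{f(\eta_n)}\xra[n\to\infty]{}M\br{f(\eta)}$ for the limit function~$f$.

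So suppose $f_k\in L_{\mathbb{F}}(G)$ and $\hn{f_k-f}\xra[k\to\infty]{}0$. Fix $\ep>0$ and pick $k$ with $\hn{f-f_k}<\ep/3$. For every $n$, the triangle inequality gives
\begin{equation*}
\Bm{M\br{f(\eta_n)}-M\br{f(\eta)}}\leqslant\Bm{M\br{(f-f_k)(\eta_n)}}+\Bm{M\br{f_k(\eta_n)}-M\br{f_k(\eta)}}+\Bm{M\br{(f_k-f)(\eta)}}.
\end{equation*}
By the preceding statement the first and third summands are each at most $\hn{f-f_k}<\ep/3$, while the middle summand tends to~$0$ as $n\to\infty$ because $f_k\in L_{\mathbb{F}}(G)$, hence is $<\ep/3$ for all large~$n$. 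Therefore $\Bm{M\br{f(\eta_n)}-M\br{f(\eta)}}<\ep$ for all large~$n$, which proves $M\br{f(\eta_n)}\xra[n\to\infty]{}M\br{f(\eta)}$ and so $f\in L_{\mathbb{F}}(G)$.

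There is no real obstacle here\: the only point requiring the earlier lemma is the uniform bound on $\Bm{M\br{g(\xi)}}$ that lets one control the two outer terms simultaneously for all~$n$, and everything else is the routine closed\dh subspace argument. (As a side remark, the same argument applied coordinatewise, together with Corollary~\ref{lif}, would also show that $L_W(G)$ is closed in the space of continuous bounded $W$\dh valued functions, but that is not needed for the present statement.)
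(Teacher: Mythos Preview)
Your proof is correct and is essentially the same argument as the paper's: both invoke the earlier bound $\Bm{M\br{g(\xi)}}\leqslant\hn{g}$ to control the outer terms uniformly in~$n$, and then use the membership $f_k\in L_{\mathbb{F}}(G)$ for the middle term. The paper phrases this as an interchange\dh of\dh limits (uniform convergence in~$n$) rather than a three\dh epsilon split, but the content is identical.
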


\begin{proof} Suppose that $f_m\in L_{\mathbb{F}}(G)$ ($m\in\mathbb{N}$), $f_{\infty}\in C_{\mathbb{F}}(G)$ and, also, $f_m\rightrightarrows f_{\infty}$
on~$G$, i.\,e. $c_m:=\hn{f_m-f_{\infty}}\xra[m\to\infty]{}0$. We need to prove that $f_{\infty}\in L_{\mathbb{F}}(G)$.

Set $a_{m,n}:=M\br{f_m(\eta_n)}$ and $a_m:=M\br{f_m(\eta)}$ ($m\in\mathbb{N}\sqcup\{\infty\}$, $n\in\mathbb{N}$). For any random variable~$\xi$ on~$G$
and number $m\in\mathbb{N}$, we have
\begin{equation*}
\Bm{M\br{f_m(\xi)}-M\br{f_{\infty}(\xi)}}=\Bm{M\br{(f_m-f_{\infty})(\xi)}}\leqslant\hn{f_m-f_{\infty}}=c_m.
\end{equation*}
In particular, $|a_{m,n}-a_{\infty,n}|\leqslant c_m$ and $|a_m-a_{\infty}|\leqslant c_m$ ($m,n\in\mathbb{N}$). Recall that $c_m\xra[m\to\infty]{}0$.
Hence, $a_m\xra[m\to\infty]{}a_{\infty}$ and $a_{m,n}\xra[m\to\infty]{}a_{\infty,n}$ \textit{uniformly by} $n\in\mathbb{N}$. Meanwhile,
$a_{m,n}\xra[n\to\infty]{}a_m$ ($m\in\mathbb{N}$). Therefore, $a_{\infty,n}\xra[n\to\infty]{}a_{\infty}$, i.\,e. $f_{\infty}\in L_{\mathbb{F}}(G)$.
\end{proof}

It is obvious that the subspaces $C_{\mathbb{C}}(G)$ and $L_{\mathbb{C}}(G)$ are complexifications of the subspaces $C_{\mathbb{R}}(G)$ and
$L_{\mathbb{R}}(G)$ respectively. It follows that
\begin{equation}\label{reco}
(\eta_n\xra[n\to\infty]{F}\eta)\quad\Leftrightarrow\quad\br{L_{\mathbb{R}}(G)=C_{\mathbb{R}}(G)}\quad\Leftrightarrow\quad\br{L_{\mathbb{C}}(G)=C_{\mathbb{C}}(G)}.
\end{equation}

From now, we will additionally assume that the topological space~$G$ is in fact a~compact algebraic group\~ for its Borel $\sigma$\dh algebra and
Haar measure, save the earlier notations $\mathcal{B}$ and~$\mu$ respectively.

\begin{df} \textit{Matrix coefficients} of the group~$G$ are any functions of type $(\alpha\circ\ph)\colon G\to\mathbb{C}$, where
$\ph\colon G\to\mathbf{GL}(V)$ is an irreducible complex representation and $\alpha$ is a~linear function $\End(V)\to\mathbb{C}$.
\end{df}

The following theorem is known as <<part~I>> of the Peter--Weyl theorem (see, for instance, \cite[\Ss I.5]{Kn}, Theorem~1.12 and Remark after its
statement).

\begin{theorem}\label{PW} The linear span of matrix coefficients of the group~$G$ is dense in $C_{\mathbb{C}}(G)$.
\end{theorem}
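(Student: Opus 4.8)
The plan is to derive Theorem~\ref{PW} from the Stone--Weierstrass theorem by checking that the linear span $\mathcal{A}$ of matrix coefficients of~$G$ is in fact a point-separating, conjugation-closed subalgebra of $C_{\mathbb{C}}(G)$ containing the constants.

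First I would note that $\mathcal{A}$ coincides with the linear span of the matrix coefficients $\alpha\circ\ph$ of \emph{all} finite-dimensional complex representations $\ph\colon G\to\mathbf{GL}(V)$, $\alpha\in\End(V)^*$: since $G$ is compact, averaging a Hermitian form (over the Haar measure) makes every such $V$ a direct sum of irreducible invariant subspaces, so each $\alpha\circ\ph$ is a sum of matrix coefficients of irreducible representations; conversely the irreducible ones are already of this form. With this in hand the three Stone--Weierstrass hypotheses are routine: the constant~$1$ is a matrix coefficient of the trivial one-dimensional representation; the product of a matrix coefficient of~$\ph_1$ by one of~$\ph_2$ is a matrix coefficient of the finite-dimensional representation $\ph_1\otimes\ph_2$, hence lies in~$\mathcal{A}$; and $\overline{\alpha\circ\ph}$ is a matrix coefficient of the conjugate representation $\overline{\ph}\colon g\mapsto\overline{\ph(g)}$ (again irreducible when $\ph$ is), hence also lies in~$\mathcal{A}$.

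It remains to see that $\mathcal{A}$ separates the points of~$G$, and here I would use that $G$ is an \emph{algebraic} group: it is realized as a closed subgroup of some $\mathbf{GL}(V_0)$, so the matrix coefficients of the faithful representation~$V_0$ already separate points (in the purely topological, compact Lie setting one would instead invoke the existence of a faithful finite-dimensional representation). Since $G$ with its real topology is compact and Hausdorff, the complex Stone--Weierstrass theorem then yields that $\mathcal{A}$ is dense in $C_{\mathbb{C}}(G)$, which is the assertion. The only step carrying genuine content is the separation of points; everything else is bookkeeping with tensor products and conjugates of representations. Alternatively one could run the classical analytic argument, diagonalising a self-adjoint Hilbert--Schmidt convolution operator on $L^2(G,\mu)$ to extract finite-dimensional $G$-invariant subspaces of functions, as is done in the cited reference \cite[\Ss I.5]{Kn}.
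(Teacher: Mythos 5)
Your argument is correct, but it is genuinely different from what the paper does: the paper gives no proof at all of Theorem~\ref{PW}, simply quoting it as ``part~I'' of the Peter--Weyl theorem from \cite[\Ss I.5]{Kn}, where the proof is the classical analytic one (diagonalising compact self-adjoint convolution operators on $L^2(G,\mu)$ to produce finite-dimensional invariant subspaces) that you mention only in your closing sentence. Your Stone--Weierstrass route is a clean and complete alternative in the present setting precisely because $G$ is a compact \emph{algebraic} group: the faithful finite-dimensional representation needed for point separation comes for free from the algebraic-group structure (a closed embedding into some $\mathbf{GL}(V_0)$, complexified if $V_0$ is real), complete reducibility via an averaged invariant Hermitian form reduces everything to irreducible matrix coefficients, and tensor products, conjugate representations and the trivial representation give the algebra, conjugation-closure and constants required by Stone--Weierstrass. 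What your approach buys is elementarity and self-containedness for compact linear groups; what it does not give is the general compact-topological-group statement, where the existence of enough finite-dimensional representations to separate points is itself the hard content and the Hilbert--Schmidt spectral argument of the cited reference is unavoidable. For the purposes of this paper either justification suffices, and your proof could stand in place of the citation without affecting anything downstream (Theorem~\ref{eqv} only uses density of the span of matrix coefficients in $C_{\mathbb{C}}(G)$).
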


\begin{theorem}\label{eqv} The following conditions are equivalent\:
\begin{enumerate}
\item\label{conv} $\eta_n\xra[n\to\infty]{F}\eta$\~
\item\label{matr} for arbitrary irreducible complex representation $\ph\colon G\to\mathbf{GL}(V)$ and linear function $\alpha\colon\End(V)\to\mathbb{C}$,
we have $(\alpha\circ\ph)\in L_{\mathbb{C}}(G)$\~
\item\label{repr} for any irreducible complex representation $\ph\colon G\to\mathbf{GL}(V)$, there holds
\begin{equation}\label{conm}
M\br{\ph(\eta_n)}\xra[n\to\infty]{}M\br{\ph(\eta)}.
\end{equation}
\end{enumerate}
\end{theorem}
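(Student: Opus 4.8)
The plan is to prove the cycle of implications $\ref{conv}\Rightarrow\ref{matr}\Rightarrow\ref{repr}\Rightarrow\ref{conv}$, using the Peter--Weyl theorem (Theorem~\ref{PW}) together with the closedness of $L_{\mathbb{C}}(G)$ (Lemma~\ref{clo}) and the characterization of convergence in distribution via \eqref{reco}.

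First, the implication $\ref{conv}\Rightarrow\ref{matr}$ is essentially immediate: if $\eta_n\xra{F}\eta$, then by \eqref{reco} we have $L_{\mathbb{C}}(G)=C_{\mathbb{C}}(G)$, so every continuous bounded function $G\to\mathbb{C}$ lies in $L_{\mathbb{C}}(G)$, and in particular each matrix coefficient $(\alpha\circ\ph)$ does, since it is continuous (as a composition of a representation with a linear function) and bounded (as $G$ is compact).

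For $\ref{matr}\Rightarrow\ref{conv}$ (which it is natural to insert as part of the cycle), I would argue that the set of $f\in C_{\mathbb{C}}(G)$ lying in $L_{\mathbb{C}}(G)$ is a linear subspace that contains all matrix coefficients by hypothesis, hence contains their linear span. By Theorem~\ref{PW} this span is dense in $C_{\mathbb{C}}(G)$, and by Lemma~\ref{clo} the subspace $L_{\mathbb{C}}(G)$ is closed, so $L_{\mathbb{C}}(G)=C_{\mathbb{C}}(G)$, which by \eqref{reco} gives $\eta_n\xra{F}\eta$. This step — bridging from ``matrix coefficients converge'' to ``all continuous functions converge'' via density plus closedness — is the conceptual heart of the theorem, though it is short given the lemmas already established.

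Finally, for the equivalence $\ref{matr}\Leftrightarrow\ref{repr}$ I would apply Corollary~\ref{lif} with $W=\End(V)$: the condition that $(\alpha\circ\ph)\in L_{\mathbb{C}}(G)$ for all linear $\alpha\colon\End(V)\to\mathbb{C}$, i.e.\ for all $\alpha\in W^*$, is by Corollary~\ref{lif} precisely the condition $\ph\in L_W(G)$, which unwinds to the convergence $M\br{\ph(\eta_n)}\to M\br{\ph(\eta)}$ in $\End(V)$, i.e.\ \eqref{conm}. Here one should note $\ph$ is continuous and bounded (compactness of $G$, hence of $\ph(G)\subset\mathbf{GL}(V)$) so it is a legitimate element of the function space. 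I do not anticipate a serious obstacle anywhere; the only point requiring a little care is making sure the continuity and boundedness hypotheses needed to invoke Lemma~\ref{clo}, Corollary~\ref{lif}, and \eqref{reco} are all genuinely met by matrix coefficients and by $\ph$ itself, which follows from compactness of $G$.
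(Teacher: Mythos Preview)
Your proposal is correct and follows essentially the same approach as the paper: the paper proves $\ref{conv}\Leftrightarrow\ref{matr}$ by invoking \eqref{reco}, Lemma~\ref{clo}, and Theorem~\ref{PW}, and $\ref{matr}\Leftrightarrow\ref{repr}$ via Corollary~\ref{lif}, exactly as you do. Your write-up simply unpacks these references into the underlying arguments (density of matrix coefficients, closedness of $L_{\mathbb{C}}(G)$, and the dual-space characterization of $L_W(G)$), so there is no substantive difference.
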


\begin{proof} The equivalence $\text{\ref{conv}}\Leftrightarrow\text{\ref{matr}}$ follows from Equation~\eqref{reco}, Lemma~\ref{clo} and Theorem~\ref{PW},
and the equivalence $\text{\ref{matr}}\Leftrightarrow\text{\ref{repr}}$\ti from Corollary~\ref{lif}.
\end{proof}

Let $\ph\colon G\to\mathbf{GL}(V)$ be a~complex representation.

For any independent random variables $\xi_1\sco\xi_n$ on~$G$, the random variables $\ph(\xi_i)$ ($i\bw=1\sco n$) on the space $\End(V)$ are also independent
that implies
\begin{equation*}
M\br{\ph(\xi_1\sd\xi_n)}=M\br{\ph(\xi_1)\sd\ph(\xi_n)}=\Br{M\br{\ph(\xi_1)}}\sd\Br{M\br{\ph(\xi_n)}}.
\end{equation*}
Consequently, for arbitrary element $g\in G$ and random variable~$\eta$ on~$G$, we have
$M\br{\ph(g\eta)}\bw=\Br{M\br{\ph(g)}}\cdot\Br{M\br{\ph(\eta)}}\bw=\br{\ph(g)}\cdot\Br{M\br{\ph(\eta)}}$.

\section{Proofs of the results}\label{prove}

In this section, Theorem~\ref{main} is proved.

Recall that $\text{\ref{asyr}}\Ra\text{\ref{asy}}$ and $\text{\ref{cos}}\Ra\text{\ref{cosu}}$ (see \S\,\ref{introd}). It remains to prove that
$\text{\ref{asy}}\Ra\text{\ref{cos}}$ and $\text{\ref{cosu}}\Ra\text{\ref{asyr}}$.

Take arbitrary random variables $\xi$ and $\xi_n$ ($n\in\mathbb{N}$) from the statement of Theorem~\ref{main}. Set $\eta_n:=\xi_1\sd\xi_n$
($n\in\mathbb{N}$).

\ul{$\text{\ref{asy}}\Ra\text{\ref{cos}}$}.

Take any random variable~$\eta$ on~$G$, subgroup $H\subset G$ and element $b\in N(H)$ such that $\eta_n\xra[n\to\infty]{F}\eta$
and $\xi\in bH$ a.\,s. It is required to prove that $b\in H$.

In the compact algebraic group~$G$, there exists metrics~$\rho$ invariant under left and right multiplications and inducing the standard real topology.

Choose an arbitrary number $\ep>0$.

Since $G$ is a~compact group, it has a~finite $\ep$\dh net~$Y$, and, hence, for some neighborhood~$U$ of type $U_{\ep}(a)$ ($a\in Y$), there holds
$M\br{I_U(\eta)}=P(\eta\in U)>0$.

The function $f\colon G\to\mathbb{R},\,g\to\max\bc{2\ep-\rho(a,g);0}$ is continuous and satisfies $0\leqslant f\leqslant2\ep$ (so, is bounded). Thus,
\begin{align}
\fa g\in U\quad\quad\quad\quad&\rho(a,g)<\ep,\quad f(g)>\ep;\notag\\
f\geqslant\ep I_U;\quad\quad\quad\quad&f(\eta)\geqslant\ep\cdot I_U(\eta);\notag\\
M\br{f(\eta_n)}\xra[n\to\infty]{}{}&M\br{f(\eta)}\geqslant\ep\cdot M\br{I_U(\eta)}>0;\notag\\
\exi n_0\in\mathbb{N}\quad\fa n\geqslant n_0\quad\quad\quad\quad&M\br{f(\eta_n)}>0\label{pos}.
\end{align}

Take an arbitrary positive integer $n\geqslant n_0$. Note that $\eta_n\in b^nH$ a.\,s. If $f|_{(b^nH)}\equiv0$, then $f(\eta_n)=0$ a.\,s. that
contradicts~\eqref{pos}. Hence, for some elements $h_n\in H$ and $g_n:=b^nh_n$, we have $f(g_n)\ne0$, i.\,e. $\rho(a,g_n)<2\ep$.

If $n:=n_0$, then $n,n+1\geqslant n_0$, and, therefore, $\rho(a,g_n),\rho(a,g_{n+1})<2\ep$, $\rho(g_{n+1},g_n)<4\ep$,
$\rho(b,H)\leqslant\rho(b,h_nh_{n+1}^{-1})=\rho\br{b^nbh_{n+1},b^n(h_nh_{n+1}^{-1})h_{n+1}}=\rho(g_{n+1},g_n)<4\ep$.

Since the number~$\ep$ is chosen arbitrarily and the subgroup $H\subset G$ is closed, then $b\in H$.

So, the implication $\text{\ref{asy}}\Ra\text{\ref{cos}}$ is completely proved.

\ul{$\text{\ref{cosu}}\Ra\text{\ref{asyr}}$}.

The subgroup $\overline{G}=\supp\xi\subset G$ with probability~$1$ contains all random variables $\xi$, $\xi_n$ and $\eta_n$ ($n\in\mathbb{N}$).
Therefore, while proving the implication $\text{\ref{cosu}}\Ra\text{\ref{asyr}}$, we can, without loss of generality, assume that $\overline{G}=G$.

Suppose that, for each subgroup $H\subset G$ and coset $Z\in\br{N(H)}/H$ such that $\xi\in Z$ a.\,s., there holds $H=G$. We need to prove that
$\eta_n\xra[n\to\infty]{F}\eta$ where $\eta\sim R(G)$.

The measure~$\mu_{\eta}$ on~$G$ coincides with~$\mu$ that implies
\begin{align*}
&\fa g\in G\quad\fa B\in\mathcal{B}&\quad\quad\quad&\mu_{g\eta}(B)=P(g\eta\in B)=P(\eta\in g^{-1}B)=\mu(g^{-1}B)=\mu(B);\\
&\fa g\in G\quad&&\mu_{g\eta}\equiv\mu\equiv\mu_{\eta},\quad\quad\eta\sim g\eta.
\end{align*}

Let $\ph\colon G\to\mathbf{GL}(V)$ be any irreducible complex representation. Prove the relation~\eqref{conm}.

The subspace $V^G\subset V$, being $G$\dh invariant, equals $0$ or~$V$. If $V^G=V$, then $\ph(G)=\{E\}$, $\ph(\eta_n)=\ph(\eta)=E=\const$, that obviously
implies~\eqref{conm}. So, assume from now that $V^G=0$.

For each $g\in G$, we have $\eta\sim g\eta$, $M\br{\ph(\eta)}=M\br{\ph(g\eta)}=\br{\ph(g)}\cdot\Br{M\br{\ph(\eta)}}$. Therefore,
$\Br{M\br{\ph(\eta)}}V\subset V^G=0$, $M\br{\ph(\eta)}=0$. Set $T:=M\br{\ph(\xi)}$. Then, $M\br{\ph(\eta_n)}=T^n$ ($n\in\mathbb{N}$). The
space~$V$ has a~$G$\dh invariant positively definite Hermitian form $(\cdot,\cdot)$\~ denote by~$\hn{\cdot}$ the corresponding norm.

Suppose that the operator~$T$ has an eigenvalue~$\lambda$ with $|\lambda|\geqslant1$. In this case, there exists a~vector $v\in V\setminus\{0\}$ such
that $Tv=\lambda v$. Note that
\begin{equation*}
\hn{\xi v-\lambda v}^2=\hn{\xi v}^2+\hn{\lambda v}^2-2\Rea(\xi v,\lambda v)=\hn{v}^2+\hn{\lambda v}^2-2\Rea(\xi v,\lambda v)
\end{equation*}
and $M(\xi v,\lambda v)=(Tv,\lambda v)=(\lambda v,\lambda v)=\hn{\lambda v}^2$\~ hence,
\begin{equation*}
M\br{\hn{\xi v-\lambda v}^2}=\hn{v}^2+\hn{\lambda v}^2-2\Rea\br{\hn{\lambda v}^2}=\hn{v}^2-\hn{\lambda v}^2=\br{1-|\lambda|^2}\cdot\hn{v}^2\leqslant0.
\end{equation*}
Therefore, $\xi v=\lambda v$ a.\,s. Denote by~$G_{\ha{v}}$ the subgroup of all elements of~$G$ preserving the line $\ha{v}=\mathbb{C}v$. Obviously,
the kernel of the homomorphism $G_{\ha{v}}\to\mathbf{GL}\br{\ha{v}},\,g\to g|_{\ha{v}}$ is~$G_v$. So, $G_v\lhd G_{\ha{v}}$. All elements of~$G$
taking $v$ to $\lambda v$ form a~coset $gG_v$ ($g\in G_{\ha{v}}\subset N(G_v)$), and $\xi\in gG_v$ a.\,s. By assumption, $G_v=G$, $v\in V^G=0$, that
contradicts the choice of~$v$.

Hence, $|\lambda|<1$ for all $\lambda\in\Spec T$. By Statement~\ref{jor}, each cell~$J$ of the Jordan form of the operator~$T$ satisfies
$J^n\xra[n\to\infty]{}0$. Therefore, $T^n\xra[n\to\infty]{}0$, i.\,e. $M\br{\ph(\eta_n)}\xra[n\to\infty]{}M\br{\ph(\eta)}$.

So, we obtained that any irreducible complex representation $\ph\colon G\to\mathbf{GL}(V)$ satisfies~\eqref{conm}. By Theorem~\ref{eqv},
$\eta_n\xra[n\to\infty]{F}\eta$.

It completely proves the implication $\text{\ref{cosu}}\Ra\text{\ref{asyr}}$ and, thus, the whole Theorem~\ref{main}.

\section*{Acknowledgements}

The author is grateful to Prof. \fbox{E.\,B.\?Vinberg} for exciting interest to algebra.

The author dedicates the article to E.\,N.\?Troshina.


\begin{thebibliography}{9}

\bibitem{qua}
Yashunskii\?A.\,D.
On transformations of probability distributions by read-once quasigroup formulae // Discr. Math. Appl. 2013. Vol. 23. \No\,2. Pp.~211--223.

\bibitem{fie}
Yashunskii\?A.\,D.
On read-once transformations of random variables over finite fields // Discr. Math. Appl. 2015. Vol. 25. \No\,5. Pp.~311--321.

\bibitem{rin}
Yashunskii\?A.\,D.
Convex algebras of probability distributions induced by finite associative rings // Discr. Math. Appl. 2021. Vol. 31. \No\,3. Pp.~223--230.

\bibitem{Kn}
Knapp\?A.\,W.
Representation theory of semisimple groups. Princeton Univ. Press, 1986, 773~p.\\
ISBN\: 0-691-09089-0.

\end{thebibliography}
\end{document}